\documentclass{amsart}
\usepackage{latexsym}
\usepackage{amsmath,amsfonts,epsfig, graphics, graphicx, amsthm, amssymb}
\input{xy}
\xyoption{all}

\newtheorem{theorem}{Theorem}[section]

\newtheorem{corollary}[theorem]{Corollary}

\newtheorem{defn}{Definition}

\begin{document}

\title{Trilinear Kloosterman fractions I: partially fixed moduli and unbalanced convolutions}
\author[T. Wright]{Thomas Wright}
\begin{abstract}  
In this paper, we improve on Fouvry and Radziwiłł's results on unbalanced convolutions.  In particular, we find that if $(\alpha_m)$ and $(\beta_n)$ are sequences supported on $m\sim M$ and $n\sim M$ where $\beta_n$ is equidistributed for small moduli, then
\begin{gather*}\sum_{q\sim Q}\left|\mathop{\sum\sum}_{\substack{n\sim N,m\sim M \\ mn\equiv a\pmod q}}\alpha_m\beta_n-\frac{1}{\phi(q)}\mathop{\sum\sum}_{\substack{n\sim N,m\sim M \\ (mn,q)=1}}\alpha_m\beta_n\right|\ll \frac{X}{\log^A X},
\end{gather*}
as long as $\exp((\log x)^{\varepsilon}) \leq N \leq Q^{-11/12} X^{17/36-\varepsilon}$ with $Q\leq X^{1/2+1/66-\delta}$, along with wider bounds for $N$ if $Q\leq X^{\frac{45}{89}-\epsilon}$.  The former improves the allowable range of $N$, while the latter improves the range of $Q$.  To prove these new bounds, we improve Bettin and Chandee's famous result on trilinear forms with Kloosterman fractions in the case where the denominator has a fixed factor.
\end{abstract}
\maketitle

\section{Introduction}

A common question in number theory is this: given an arithmetic function $f$ and an integer $a\neq 0$, what is the largest $\delta>0$ for which $Q\leq X^{\frac 12+\delta}$ implies that
$$\sum_{q\sim Q}\left|\sum_{\substack{n\sim X \\ n\equiv a\pmod q}}f(n)-\frac{1}{\phi(q)}\sum_{\substack{n\sim X \\ (n,q)=1}}f(n)\right|\ll \frac{X}{\log^A X}$$
for some (or possibly for every) $A>0$?  

Often - such as in the case of the primes - such questions can be expressed in terms of convolutions.  For $m\sim M$ and $n\sim N$ with $MN=X$, define $\alpha_m$ and $\beta_n$ to be sequences of complex numbers.  We then wish to find a $\delta$ such that for $Q\leq X^{1/2+\delta}$,
\begin{gather}\label{firstquestion}\sum_{\substack{q\sim Q \\ (a,q)=1}}\left|\mathop{\sum\sum}_{\substack{n\sim N,m\sim M \\ mn\equiv a\pmod q}}\alpha_m\beta_n-\frac{1}{\phi(q)}\mathop{\sum\sum}_{\substack{n\sim N,m\sim M \\ (mn,q)=1}}\alpha_m\beta_n\right|\ll \frac{X}{\log^A X}.
\end{gather}

The cases where $\delta<0$ follow from the generalized version of the Bombieri-Vinogradov Theorem.  Cases where $\delta>0$, however, are often much more difficult.  For instance, in the case of the primes, this is the Elliot-Halberstam conjecture, and thus at present there is no fixed $\delta>0$ for which (\ref{firstquestion}) can be shown to hold if $\alpha_m=\mu(m)$ and $\beta_n=\log n$. 

In order for it to be possible for (\ref{firstquestion}) to hold, it is generally required that at least one of the two functions is equidistributed for small moduli.  We call such equidistribution the \textit{Siegel-Walfisz condition}, defined as follows:
\begin{defn}
An arithmetic function $\beta_n$ is defined to be \textit{Siegel-Walfisz} if there exists
an integer $k>0$ such that for any fixed $A>0$, uniformly in $x\geq 2$, $q>|a|\geq 1$, $r\geq 1$, and $(a,q)=1$, we have
$$\sum_{\substack{x\leq n\leq 2x \\ n\equiv a\pmod q \\ (n,r)=1}}\beta_n=\frac{1}{\phi(q)}\sum_{\substack{x\leq n\leq 2x \\ (n,qr)=1}}\beta_n+O\left(x(\log x)^{-A}\tau_k(r)\right).$$
\end{defn}
Here, $\tau_k$ denotes the $k$-fold divisor function.

In 2018, Fouvry and Radziwill showed that (\ref{firstquestion}) holds for small positive values of $\delta>0$ as long as $N$ is fairly small relative to $M$.  In particular, Corollary 1.1 of \cite{FR} was the following:
\begin{corollary}\label{Cor1.1}
Let $k>0$ and $\varepsilon>0$ be given. Let 
$\alpha_m$ and $\beta_n$ be two sequences of complex numbers such that
$$|\alpha_m|\leq \tau_k(m)
\quad\text{and}\quad |\beta_n|\leq \tau_k(n)$$
for every $m,n>1$.  Suppose that $\beta_n$ satisfies the Siegel-Walfisz condition. 
Then for every $A>0$, uniformly for $M,N\geq 2$ with 
$$\frac{MN}{2}\leq X\leq 4MN,$$
we have
\begin{gather*}
\sum_{\substack{Q\leq q\leq 2Q \\ (q,a)=1}}
\left|\sum_{\substack{X<mn\leq 2X \\ mn\equiv a \pmod q}}
\alpha_m \beta_n-\frac{1}{\varphi(q)}\sum_{\substack{X<mn\leq 2X \\ (mn,q)=1}}
\alpha_m \beta_n\right|\ll X (\log X)^{-A},
\end{gather*}
provided that one of the following three conditions holds:
\begin{enumerate}
\item[(i)] $\exp((\log X)^{\varepsilon}) \leq N \leq Q^{-11/12} X^{17/36-\varepsilon}$
and $1\leq |a|\leq X/12$;
\item[(ii)] $\exp((\log X)^{\varepsilon}) \leq N \leq X^{7/90-\varepsilon}$,
$Q\leq X^{53/105-\varepsilon}$,
and $1\leq |a|\leq X/12$;
\item[(iii)] $\exp((\log X)^{\varepsilon}) \leq N \leq X^{101/630-\varepsilon}$,
$Q\leq X^{53/105-\varepsilon}$,
and $1\leq |a|\leq (X/4)^{\varepsilon/1000}$.
\end{enumerate}
\end{corollary}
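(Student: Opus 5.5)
Corollary \ref{Cor1.1} is a restatement of Fouvry and Radziwi\l\l's result, so my plan is to reconstruct their argument. The standard route is the dispersion method of Linnik in the form used by Bombieri--Friedlander--Iwaniec, with the final bilinear step handled by bounds for sums of Kloosterman fractions.

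\textbf{Reduction to a dispersion sum.} First, I would remove the absolute values by introducing coefficients $c_q$ with $|c_q|\le 1$. Next, I would isolate the small variable $n$ by Cauchy--Schwarz over $m$ (the long variable, with $|\alpha_m|\le\tau_k(m)$). This reduces the problem to bounding
$$\sum_{m\sim M}\Bigl|\sum_{q\sim Q}c_q\Bigl(\sum_{\substack{n\sim N\\ mn\equiv a\ (q)}}\beta_n-\frac{1}{\phi(q)}\sum_{\substack{n\sim N\\ (mn,q)=1}}\beta_n\Bigr)\Bigr|^2$$
by $MN^2(\log X)^{-2A-C}$. Expanding the square gives three terms $S_1-2\Re S_2+S_3$. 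Here $S_3$ involves only the main terms. $S_2$ fixes one progression condition, which determines $m$ modulo $q_1$. $S_1$ involves pairs $(q_1,n_1),(q_2,n_2)$ with $mn_i\equiv a \pmod{q_i}$, which forces $n_1\equiv n_2 \pmod{(q_1,q_2)}$ and determines $m$ modulo $[q_1,q_2]$.

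\textbf{Main terms.} For $S_2$ and $S_3$, and for the zero frequency of $S_1$, I would evaluate the $m$-sum by completing it over residue classes. This is where the Siegel--Walfisz hypothesis on $\beta_n$ enters: it handles the contribution when the $n$'s are restricted to residue classes of small moduli, such as $(q_1,q_2)$ small or after factoring out the common part $d=(q_1,q_2)$. The large-$d$ part is trivially small. The lower bound $N\ge\exp((\log X)^\varepsilon)$ is what makes Siegel--Walfisz applicable with a saving in $\log$. All three main terms should agree up to $O(MN^2(\log X)^{-B})$.

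\textbf{Error term.} Apply Poisson summation to the $m$-sum in $S_1$, modulo $q_1q_2/d$ (using $mn_1\equiv a \pmod{q_1}$ and $mn_2\equiv a \pmod{q_2}$). The nonzero frequencies $h$ satisfy $1\le|h|\le H\approx Q^2X^{\varepsilon}/(dM)$. They produce phases of the shape
$$e\Bigl(ah\,\frac{\overline{n_1q_1}}{q_2}\Bigr)\quad\text{or}\quad e\Bigl(ah\,\frac{\overline{n_2 q_2}}{n_1q_1}\Bigr)$$
after reciprocity, so one obtains a trilinear sum of Kloosterman fractions in the variables $h$, $n_1q_1$ (or $q_1$), and $q_2 n_2$.

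\textbf{Bounding the trilinear sum: the main obstacle.} I would apply the Bettin--Chandee bound for trilinear forms with Kloosterman fractions, or the Duke--Friedlander--Iwaniec bound,
$$\sum_{a\sim A}\sum_{b\sim B}\sum_{c\sim C}\alpha_a\beta_b\nu_c\, e\Bigl(\frac{\vartheta a\bar{c}}{b}\Bigr)\ll \|\alpha\|\|\beta\|\|\nu\|\,(1+\tfrac{|\vartheta| A}{BC})^{1/2}\bigl(AC\bigr)^{7/20+\varepsilon}(B+C)^{1/4+\varepsilon}+\cdots,$$
grouping the variables as $a=h$, $b=q_1$ (or $q_1n_1$), $c=q_2n_2$. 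Condition (i) arises from optimizing this bound against the saving needed, which gives $N\le Q^{-11/12}X^{17/36-\varepsilon}$.

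Conditions (ii) and (iii) instead come from Deshouillers--Iwaniec-type bounds for sums of Kloosterman sums, i.e.\ the spectral theory of automorphic forms, applied after treating $q_1,q_2$ as the smooth long variables. In (iii), the restriction $|a|\le X^{\varepsilon/1000}$ allows the use of the Kim--Sarnak exceptional-eigenvalue bound without a loss depending on $a$.

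The delicate part will be the bookkeeping of the $d=(q_1,q_2)$ factor and the frequency range $H$ when feeding into the trilinear estimate. That bookkeeping is exactly where the exponents $11/12$, $17/36$, $7/90$, $53/105$ and $101/630$ are produced.
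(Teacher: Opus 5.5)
Your plan for (i) follows the Fouvry--Radziwi\l\l{} route: Cauchy--Schwarz over $m$, the expansion $W-2\Re V+U$, Siegel--Walfisz entering through $\mathcal E^*(\beta,N,Q)$, Poisson in $m$, reciprocity, then Theorem~\ref{BCThm}. Your account of (ii) and (iii), however, has a genuine gap. These cases do not come from a separate Deshouillers--Iwaniec argument whose bookkeeping yields $53/105$, $7/90$ and $101/630$. They come from splitting the $N$-range and gluing (i) to Fouvry's earlier theorems. Those theorems (his Corollaire~1, for $|a|<X/3$, and his Th\'eor\`eme~1, for very small $|a|$) give the estimate when $N>X^{\varepsilon}$ and $Q\le\min(\sqrt{NX},X^{4/7}N^{-6/7})$, resp.\ $Q\le\min(\sqrt{NX},X^{5/8}N^{-3/4})$. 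Because of the constraint $Q\le\sqrt{NX}$, they say nothing for $N<Q^2/X$. In particular they say nothing at $N=\exp((\log X)^{\varepsilon})$ once $Q>X^{1/2}$. That range has to come from (i), where the Siegel--Walfisz hypothesis is what lets one pass $X^{1/2}$. The exponent $53/105$ is exactly the crossover: $Q^{-11/12}X^{17/36}=Q^2/X$ if and only if $Q=X^{53/105}$, so for $Q\le X^{53/105-\varepsilon}$ the two $N$-ranges overlap. Substituting $Q=X^{53/105}$ into $Q\le X^{4/7}N^{-6/7}$ and $Q\le X^{5/8}N^{-3/4}$ gives $N\le X^{7/90}$ and $N\le X^{101/630}$. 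The restriction on $|a|$ in (iii) is simply the hypothesis of Fouvry's Th\'eor\`eme~1, not a Kim--Sarnak input.

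Second, the phase you write after Poisson and reciprocity is not correct, and the correct one is what determines the exponents in (i). Solve $mn_1\equiv a\pmod{q_1}$ and $mn_2\equiv a\pmod{q_2}$ by the Chinese remainder theorem and apply reciprocity. Up to gcd parameters and a harmless smooth factor, this gives $e\bigl(ah(n_1-n_2)\overline{n_2q_1}/(n_1q_2)\bigr)$, the shape of the sum from (48) of Fouvry--Radziwi\l\l{} quoted in Section~4. So the Bettin--Chandee parameter is $\vartheta=a(n_1-n_2)$, which depends on both $n$-variables. You therefore cannot absorb $n_1,n_2$ into the bilinear variables with a fixed $\vartheta=a$, as your phases suggest. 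Instead, $\sum_{n_1}\sum_{n_2}$ stays outside the absolute value. The terms with $\vartheta=0$ must be treated separately, since Theorem~\ref{BCThm} needs $\vartheta\neq0$. Theorem~\ref{BCThm} is then applied with:
$|\vartheta|\ll|a|DN$;
$A=H\approx Q^2/M$;
the inverted variable $n_2q_1$ of size $NQ$;
the modulus $n_1q_2$ of size $NQ$, supported on multiples of the fixed $n_1$.
This effective replacement of $Q$ by $NQ$ yields $Q^{15/8}N^{23/8}+M^{3/20}Q^{33/20}N^{59/20}$ for the nonzero frequencies. With $\|\alpha\|\ll M^{1/2}(\log X)^{O(1)}$ and $M\asymp X/N$, the second term forces $N^{36}Q^{33}\le X^{17-\varepsilon}$, which is (i). Also, the first term of the Bettin--Chandee bound is $(ABC)^{7/20+\varepsilon}(B+C)^{1/4}$, not $(AC)^{7/20+\varepsilon}(B+C)^{1/4+\varepsilon}$.
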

A key step in this result involves a bound for trilinear forms with Kloosterman fractions discovered by Bettin and Chandee \cite{BC}.  Define
$$\mathcal B(M,N,A)=\mathop{\sum\sum\sum}_{\substack{a\sim A, m\sim M,n\sim N \\ (m,n)=1}}\alpha_m\beta_n\nu_a e\left(\vartheta \frac{a\overline{m}}{n}\right),$$
where $\vartheta$ is a nonzero integer.

In \cite{BC}, the authors then prove the following:
\begin{theorem}\label{BCThm}
\begin{align*}
\mathcal B(M,N,A)\ll &||\alpha||||\beta||||\nu||\left(1+\frac{|\vartheta|A}{MN}\right)^\frac 12\\
&\times\left((AMN)^{\frac 7{20}+\varepsilon}(M+N)^\frac 14+(AMN)^{\frac 38+\varepsilon}(AN+AM)^\frac 18\right).
\end{align*}

\end{theorem}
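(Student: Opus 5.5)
This is the Bettin–Chandee bound, and I would prove it along their lines: Cauchy--Schwarz to remove the rough coefficients, Poisson summation, and a reduction to a known bound for bilinear sums of Kloosterman fractions (Duke--Friedlander--Iwaniec type).

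\textbf{Step 1: Cauchy--Schwarz.} Put the $\beta_n$ (or $\alpha_m$) outside and apply Cauchy--Schwarz in $n$ with a smooth majorant $\Phi(n/N)$. This gives
$$|\mathcal B|^2\le\|\beta\|^2\sum_{a_1,a_2}\sum_{m_1,m_2}\nu_{a_1}\overline{\nu_{a_2}}\alpha_{m_1}\overline{\alpha_{m_2}}\sum_{(n,m_1m_2)=1}\Phi(n/N)\,e\!\left(\vartheta\frac{a_1\overline{m_1}-a_2\overline{m_2}}{n}\right).$$
Write $\frac{a_1\overline{m_1}-a_2\overline{m_2}}{n}=\frac{a_1m_2-a_2m_1}{n}\cdot\overline{m_1m_2}$. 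Then use reciprocity,
$$\frac{\overline{m_1m_2}}{n}\equiv-\frac{\overline n}{m_1m_2}+\frac{1}{m_1m_2n}\pmod 1.$$
This trades the modulus $n$ for $m_1m_2$, at the cost of a smooth phase $e\bigl(\vartheta(a_1m_2-a_2m_1)/(m_1m_2n)\bigr)$.

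\textbf{Step 2: the smooth phase and the diagonal.} The smooth phase has derivative size $|\vartheta|A/(MN)$ in $n$. Separating variables by Fourier/Mellin analysis costs exactly the factor $(1+|\vartheta|A/(MN))^{1/2}$ in the final bound, after taking square roots. The diagonal $a_1m_2=a_2m_1$ contributes roughly $\|\alpha\|^2\|\nu\|^2 N(AM)^{\varepsilon}$. After taking square roots this is dominated by the stated terms.

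\textbf{Step 3: Poisson in $n$.} For the off-diagonal terms, set $h=a_1m_2-a_2m_1\neq0$. Apply Poisson summation in $n$ modulo $m_1m_2$. The $n$-sum becomes
$$\frac{N}{m_1m_2}\sum_{\ell}\widehat\Phi\!\left(\frac{\ell N}{m_1m_2}\right)S(\vartheta h,\ell;m_1m_2),$$
a sum of Kloosterman sums. The zero frequency vanishes or is small, since it is a Ramanujan sum. Frequencies $\ell\gg M^2/N$ are negligible.

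\textbf{Step 4: the remaining sum, which is the main obstacle.} What is left is a sum of Kloosterman sums (equivalently, Kloosterman fractions) over the variables $a_1,a_2,m_1,m_2,\ell$, weighted by general coefficients. Here I would open the Kloosterman sum and regroup into a bilinear form with Kloosterman fractions $e(r\overline s/t)$ in two long smooth variables. I would then invoke the Duke--Friedlander--Iwaniec bilinear bound, sharpened by Bettin--Chandee's own estimate for sums of Kloosterman fractions.

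Balancing the resulting error terms should give the two shapes $(AMN)^{7/20}(M+N)^{1/4}$ and $(AMN)^{3/8}(AN+AM)^{1/8}$. Since the statement is symmetric in $M$ and $N$, I would run the argument both ways (Cauchy--Schwarz on $n$ or on $m$) and take the better. Controlling this regrouping, and keeping track of the coprimality conditions and of the factor $\vartheta$ inside the gcds, is the delicate part. I expect the exponent $7/20$ to come from optimizing the Duke--Friedlander--Iwaniec input via Hölder.
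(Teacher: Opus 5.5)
\textbf{There is a genuine gap: Step 4 is the whole theorem, and it contains no argument.} Steps 1--3 are correct, but they only prepare the ground.

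If you finish them with the Weil bound, you get nothing in the balanced range. Here $q=m_1m_2\asymp M^2$ and the dual length is $\ll 1+M^2/N$, so each off-diagonal $n$-sum is $\ll (M+N/M)M^{\varepsilon}$ up to gcd factors. This gives $\mathcal B\ll\|\alpha\|\|\beta\|\|\nu\|(AMN)^{\varepsilon}(N^{1/2}+A^{1/2}M+(AN)^{1/2})$, and running it the other way gives the same with $M,N$ swapped. For $M\asymp N$ both are no better than the trivial bound $(AMN)^{1/2}\|\alpha\|\|\beta\|\|\nu\|$. The theorem, by contrast, saves $\min(N^{1/8},A^{3/20}N^{1/20})$ there when $|\vartheta|A\le MN$.

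So everything must come from Step 4, and the reduction you describe there does not exist:
\begin{itemize}
\item After Poisson the only smooth variable is $\ell$. The variables $a_1,a_2,m_1,m_2$ carry arbitrary coefficients and enter both the modulus $m_1m_2$ and the numerator $\vartheta(a_1m_2-a_2m_1)$.
\item Opening $S(\vartheta h,\ell;m_1m_2)$ introduces a variable running over reduced residues mod $m_1m_2$, not over an interval. Summing over $\ell$ first simply undoes the Poisson step and returns the $n$-sum you started with.
\item Hence there is no bilinear form ``in two long smooth variables'' to which a Duke--Friedlander--Iwaniec bound could be applied.
\item Appealing to ``Bettin--Chandee's own estimate'' is circular, and the H\"older explanation of $7/20$ is a guess, not a derivation.
\end{itemize}
A smaller point: the zero frequency is the Ramanujan sum $c_{m_1m_2}(\vartheta h)$. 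It is not small when $m_1m_2$ shares a large factor with $\vartheta h$; it is harmless on average, but that needs an argument.

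\textbf{What the actual route uses.} The paper does not reprove this theorem; it quotes it from Bettin--Chandee. It does, however, spell out their mechanism, which differs from yours.
\begin{itemize}
\item Cauchy--Schwarz with $\alpha$ outside gives $\mathcal B\le\|\alpha\|\,\mathcal C_1^{1/2}$.
\item Writing $n=bn'$ with $b$ squarefull and $n'$ squarefree, and applying Cauchy--Schwarz with weights $b^{\pm1/2}$, gives $\mathcal C_1\ll\sum_b b^{1/2}\,\mathcal C_b(M,N/b,A)$. Here $\mathcal C_b$ has modulus $nb$, so it carries a fixed complementary divisor $b$.
\item The substantive input is the nontrivial bound (5.2) of Bettin--Chandee for $\mathcal C_b$. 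It comes from the Duke--Friedlander--Iwaniec-type analysis in which the modulus carries this extra fixed factor.
\item One then uses (5.2) for $b\le B$ and the trivial bound for $b>B$, and optimizes in $B$. This yields estimates of the type (7.2) of Bettin--Chandee; Section~3 of this paper redoes exactly this computation with an extra factor $R$.
\end{itemize}
Your outline has no counterpart to the complementary-divisor reduction or to an estimate like (5.2). Without some such nontrivial input in the range $M\asymp N$, your route cannot reach the stated exponents.
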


\section{New Ideas}

In this paper, we alter the Bettin-Chandee result to make it slightly more helpful for the Fouvry-Radziwill result.  

We recall that the \cite{FR} work considers functions $g(m)$ and $h(n)$ with $m\sim M$ and $n\sim N$ where $MN$ is roughly of size $X$ but $\exp((\log X)^\theta)\leq N\leq X^{1/72-\epsilon}$ (i.e. $N$ is much smaller than $M$).  When the authors of \cite{FR} then apply the result of \cite{BC}, they have a sum of the form

\begin{gather}\label{FRlike}\mathop{\sum\sum\sum}_{\substack{a\sim A, b\sim Q,q\sim Q \\ (b,Rq)=1}}f_1(b)f_2(q)f_3(a) e\left(\vartheta \frac{a\overline{b}}{Rq}\right)\end{gather}
for a parameter $R\leq 2N$.  Translating their work into the language of Bettin and Chandee, they then define their function $\alpha_{n}$ to be such that $n$ lies on intervals of the form $[RQ,2RQ]$ with the caveat that $\alpha_n=0$ if $R\nmid n$.  This allows them to apply Theorem \ref{BCThm}, but at the slight cost of having to replace $Q$ with $NQ$ in the bound.

However, the original proof of Bettin and Chandee (and the preceding work of Duke, Friedlander, and Iwaniec) already has a useful mechanism that one could exploit to find a better bound for something like (\ref{FRlike}).  In \cite{BC}, the authors define
\begin{gather}\label{C1}\mathcal C_1(M,N,A)=\sum_{m\sim M}\left|\mathop{\sum\sum}_{\substack{a\sim A, n\sim N \\ (m,n)=1}}\beta_n\nu_a e\left(\vartheta \frac{a\overline{m}}{n}\right)\right|^2\end{gather}
and then introduce the so-called "complementary divisor" $b$, considering
$$\mathcal C_{b}(M,N,A;\beta,\nu)=\sum_{\substack{m\in \mathcal M \\ (m,b)=1}}\left|\mathop{\sum\sum}_{\substack{a\in \mathcal A, n\in \mathcal N \\ (m,n)=1}}\beta_{n}\nu_a e\left(\theta \frac{a\overline{m}}{nb}\right)\right|^2.$$
The authors then relate $\mathcal C_1$ and $\mathcal C_b$ back to $\mathcal B$, bounding the latter via estimates on $\mathcal C_1$ and $\mathcal C_b$ as well as Cauchy-Schwarz.

If one were to consider instead
$$\sum_{m\sim M}\left|\mathop{\sum\sum}_{\substack{a\sim A, n\sim N \\ (m,n)=1}}\beta_n\nu_a e\left(\vartheta \frac{a\overline{m}}{Rn}\right)\right|^2$$
and 
$$\sum_{m\sim M}\left|\mathop{\sum\sum}_{\substack{a\sim A, n\sim N \\ (m,n)=1}}\beta_n\nu_a e\left(\vartheta \frac{a\overline{m}}{bRn}\right)\right|^2,$$
the sum now already has a built-in complementary divisor in $R$, and hence one could use the same process the authors used to bound $\mathcal C_b$ to find bounds for these expressions as well.  Applying this process and Cauchy-Schwarz, we can then give a bound for
\begin{gather*}\mathcal B(M,N,A;R):=\mathop{\sum\sum\sum}_{\substack{a\in \mathcal A, m\in \mathcal M,n\in \mathcal N \\ (m,nR)=1}}\alpha_m\beta_n\nu_a e\left(\theta \frac{a\overline{m}}{nR}\right)
\end{gather*}
in the same way as before.  This is the mechanism in the Bettin-Chandee proof that we use to find improvements on the Fouvry-Radziwill bound.

In particular, we will prove the following version of Bettin and Chandee's result:
\begin{theorem}\label{BCimprovement}
Let $M\ll N^2$ and $R\ll M^A$ for some large $A$.  Then
\begin{align*}\mathcal B(M,N,A;R)\ll &M^\epsilon ||\alpha||||\nu||||\beta||(AMN)^\frac 12R^\frac 14\left(1+\frac{|\vartheta|A}{MN}\right)^\frac 14\\
&\times \left(\frac{1}{N^{\frac 18}}+\frac{R^\frac 18N^\frac 18}{M^{\frac 14}}+\frac{M^\frac 1{10}}{R^{\frac{3}{20}}A^{\frac{1}{20}}N^{\frac{3}{20}}}+\frac{N^{\frac{3}{20}}}{A^{\frac{3}{20}}M^{\frac 15}}+\frac{N^\frac 38}{M^\frac 12}\right)
\end{align*}
\end{theorem}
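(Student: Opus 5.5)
We follow the architecture of Bettin and Chandee's proof of Theorem \ref{BCThm}, carrying the fixed factor $R$ through as a built-in complementary divisor. First apply Cauchy--Schwarz in $m$ to get
$$|\mathcal B(M,N,A;R)|^2\le \|\alpha\|^2\,\mathcal C_R,\qquad \mathcal C_R:=\sum_{\substack{m\sim M\\(m,R)=1}}\Bigl|\mathop{\sum\sum}_{a,n}\beta_n\nu_a e\Bigl(\vartheta\frac{a\overline m}{nR}\Bigr)\Bigr|^2 .$$
Everything is then reduced to bounding $\mathcal C_R$. Since the claimed bound has exponent $\frac14$ on $R$ and on $(1+|\vartheta|A/MN)$, the target is
$$\mathcal C_R\ll M^{\epsilon}\|\nu\|^2\|\beta\|^2\,AMN\,R^{1/2}\Bigl(1+\frac{|\vartheta|A}{MN}\Bigr)^{1/2}E^2,$$
where $E$ is the bracketed sum of five terms.

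To bound $\mathcal C_R$, expand the square and smooth the $m$-sum. This gives a sum over $a_1,a_2,n_1,n_2$ of
$$e\Bigl(\vartheta\frac{(a_1n_2-a_2n_1)\overline m}{n_1n_2R}\Bigr)$$
summed over $m$ in a smooth window. Apply Poisson summation in $m$ modulo $n_1n_2R$ (after extracting $\gcd(n_1,n_2)$). The zero frequency gives the diagonal $a_1n_2=a_2n_1$; its contribution is bounded by $M\|\nu\|^2\|\beta\|^2(AN)^{\epsilon}$ times divisor counts. The nonzero frequencies give incomplete Kloosterman-type fractions, namely a sum over $h\ll n_1n_2R\,(1+|\vartheta|A/MN)/M$ of $e(\vartheta(\cdot)\overline{h}/\ldots)$. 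Using reciprocity, $\overline{m}/(n_1n_2R)+\overline{n_1n_2R}/m\equiv 1/(mn_1n_2R)$, we rewrite these as a new trilinear form in which $R$ now sits as a fixed factor of the opposite modulus. This is exactly the shape of Bettin--Chandee's $\mathcal C_b$, with $b$ replaced by $bR$. We then run their $\mathcal C_b$ argument. Cauchy--Schwarz moves the smooth variable outside, and their large sieve for Kloosterman fractions (the Duke--Friedlander--Iwaniec bilinear estimate as refined in \cite{BC}) is applied to the inner bilinear form. The gain is that the $R$ is kept explicit as a divisor of the modulus rather than absorbed into $n$. The hypothesis $M\ll N^2$ is used to ensure the dual lengths stay in the range where this bound applies, and $R\ll M^A$ keeps divisor losses at $M^{\epsilon}$.

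Having the resulting bounds for $\mathcal C_R$ and for the dual sum $\mathcal C_{bR}$, we combine them recursively as in \cite{BC}. The $\mathcal C_{bR}$ estimate is fed back into the $\mathcal C_R$ estimate and the resulting inequality is optimized. Each of the five terms in $E$ should trace to a specific regime:
\begin{itemize}
\item $N^{-1/8}$ comes from the diagonal;
\item $R^{1/8}N^{1/8}M^{-1/4}$ comes from the modulus size $n_1n_2R$ against $M$;
\item the two terms with exponent $\frac1{20}$ come from the large-sieve term after optimizing the splitting parameter;
\item $N^{3/8}M^{-1/2}$ comes from the trivial treatment of short dual sums.
\end{itemize}
Taking square roots then gives the theorem.

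The main obstacle is the middle step. One must verify that the Bettin--Chandee large-sieve bound for $\mathcal C_b$ survives with $b$ replaced by $bR$, with coprimality conditions $(m,bR)=1$ handled uniformly in $R$, and must track the exponent of $R$ so that it comes out as the stated $R^{1/4}$, $R^{1/8}$ and $R^{-3/20}$ rather than a cruder $R^{1/2}$. I would first try Möbius inversion to remove $\gcd$ conditions involving $R$, then check that the optimization of the auxiliary parameter in \cite{BC} remains admissible in the stated ranges.
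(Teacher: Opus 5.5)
Your proposal has a genuine gap: it never actually produces the stated bound. The core step is delegated to ``run their $\mathcal C_b$ argument'' with $R$ carried along. The point you flag as the main obstacle (whether the Bettin--Chandee estimate survives $b\mapsto bR$) is left unverified, and the five terms are only guessed at.

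The missing observation is that your $\mathcal C_R$ is literally Bettin--Chandee's $\mathcal C_b$ with $b=R$, and their estimate (5.2) for $\mathcal C_b$ already holds for every $b$. So no Poisson, reciprocity or large-sieve step needs redoing, and there is no obstacle.

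What your plan does omit is the reduction that puts $\beta$ into the form in which (5.2) is applied: squarefree $n$, coprime to the fixed factor of the modulus. The paper does this in four steps:
\begin{itemize}
\item it splits off from $n$ its square-full part $b$ and its common part $r$ with $R$;
\item a weighted Cauchy--Schwarz then gives $\mathcal C_{1;R}\ll\tau(R)\sum_b b^{1/2}\sum_r\mathcal C_{brR}(M,N/br,A;\beta_{br},\nu)$, where $\mathcal C_{1;R}$ is your $\mathcal C_R$;
\item the range $b>B$ is bounded trivially;
\item for $b\le B$ it simply substitutes $b\to brR$, $N\to N/br$ in (5.2).
\end{itemize}
Since $bN\mapsto RN$, this is pure bookkeeping. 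With $\tau(R)$ and the $r$-sum costing $M^{\epsilon}$ by $R\ll M^A$, it gives
\begin{align*}
\mathcal C_{1;R}&\ll M^{\epsilon}\|\nu\|^2\|\beta\|^2\Bigl(1+\frac{|\vartheta|A}{MN}\Bigr)^{\frac12}\,\Sigma,\qquad\text{where}\\
\Sigma&=\frac{AMN}{B^{1/2}}+AMR^{\frac12}N^{\frac12}B^{\frac12}+R^{\frac34}AM^{\frac12}N^{\frac54}+\frac{AM^{\frac65}N^{\frac1{10}}}{R^{\frac25}}\\
&\quad+R^{\frac15}A^{\frac25}M^{\frac65}N^{\frac7{10}}+R^{\frac12}A^{\frac7{10}}M^{\frac35}N^{\frac{13}{10}}+R^{\frac12}AN^{\frac74}.
\end{align*}

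This also corrects several of your attributions.
\begin{itemize}
\item The $N^{-1/8}$ term is not the diagonal. It is the balance, at $B=N^{1/2}$, between the square-full tail $AMNB^{-1/2}$ and the first term of (5.2) summed over $b\le B$.
\item The hypothesis $M\ll N^2$ has nothing to do with dual lengths. It is used only to see that $AM^{6/5}N^{1/10}R^{-2/5}\le AMR^{1/2}N^{3/4}$, so that term can be dropped.
\end{itemize}
Factoring out $AMNR^{1/2}$ and using $\mathcal B\ll\|\alpha\|\,\mathcal C_{1;R}^{1/2}$ then yields the five terms directly. The third comes out with $A^{-3/10}$, which implies the stated $A^{-1/20}$.

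Finally, the from-scratch mechanism you sketch fails as written. Poisson summation in $m$ modulo $n_1n_2R$ applied to $e\bigl(\vartheta(a_1n_2-a_2n_1)\overline m/(n_1n_2R)\bigr)$ produces complete Kloosterman sums $S(\vartheta(a_1n_2-a_2n_1),h;n_1n_2R)$, not single fractions $e(\cdot\,\overline h/\cdot)$. So reciprocity cannot be applied the way you describe. Also, $\mathcal C_{bR}$ here is not a Poisson dual of $\mathcal C_R$: its $b$ is the square-full part of $n$.
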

When $R=1$, this is (7.2) of \cite{BC}.  However, it is clear that when $R$ is much larger than 1, this theorem will yield a smaller bound than would simply substituting $NR$ for $N$ in Theorem \ref{BCThm}.

Armed with this theorem, we record the following improvement of Corollary \ref{Cor1.1}:
\begin{corollary}\label{cor1.1.5}
Let $k>0$ and $\varepsilon>0$ be given. Let 
$\alpha_m$ and $\beta_n$ be $k$-fold divisor-bounded sequences of complex numbers as in Corollary \ref{Cor1.1} with $\beta_n$ satisfying the Siegel-Walfisz condition.  Then for every $A>0$, uniformly for $M,N\geq 2$ with 
$$\frac{MN}{2}\leq X\leq 4MN,$$
we have
\begin{equation}
\sum_{\substack{Q\leq q\leq 2Q \\ (q,a)=1}}\left|\mathop{\sum\sum}_{\substack{X<mn\leq 2X \\ mn\equiv a \pmod q}}
\alpha_m \beta_n-\frac{1}{\varphi(q)}\mathop{\sum\sum}_{\substack{X<mn\leq 2X \\ (mn,q)=1}}\alpha_m \beta_n\right|\ll_A x (\log X)^{-A},
\end{equation}
provided that one of the following three conditions holds:
\begin{enumerate}
\item[(i)] $\exp((\log X)^{\varepsilon}) \leq N \leq Q^{-33/28} X^{17/28-\varepsilon}$
and $1\leq |a|\leq X/12$;
\item[(ii)] $\exp((\log X)^{\varepsilon}) \leq N \leq X^{7/90-\varepsilon}$,
$Q\leq X^{45/89-\varepsilon}$,
and $1\leq |a|\leq X/12$;
\item[(iii)] $\exp((\log X)^{\varepsilon}) \leq N \leq X^{101/630-\varepsilon}$,
$Q\leq X^{45/89-\varepsilon}$,
and $1\leq |a|\leq (X/4)^{\varepsilon/1000}$.
\end{enumerate}
\end{corollary}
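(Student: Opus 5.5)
The plan is to rerun the Fouvry--Radziwiłł argument from \cite{FR} without change up to the point where they invoke Theorem \ref{BCThm}, and to substitute Theorem \ref{BCimprovement} there. Their argument runs as follows. First reduce to $(q,a)=1$ and dispose of the small-modulus contribution using the Siegel--Walfisz property of $\beta_n$. Then apply Cauchy--Schwarz over $m$ and expand the square, which produces pairs $n_1,n_2\sim N$ with $n_1\equiv n_2 \pmod q$. Next pull out $R=(n_1,n_2)$-type common factors. Finally, complete the sum over $m$ by Poisson summation, which produces the dual variable of length $A\approx QN/X\cdot(\text{stuff})$ and a sum of exactly the shape (\ref{FRlike}), namely a trilinear Kloosterman form with modulus $Rq$, $R\le 2N$, $q\sim Q$, $b\sim Q$. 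All remaining terms (diagonal $n_1=n_2$, zero frequency, main terms) are handled verbatim as in \cite{FR}, since they do not depend on the Kloosterman bound.

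For the Kloosterman step, set up the correspondence explicitly. In $\mathcal B(M,N,A;R)$ take the ``$m$'' variable to be $b\sim Q$, the ``$n$'' variable to be $q\sim Q$, the fixed factor to be $R$, and the ``$a$'' variable to be the dual frequency $h\sim H$ with $H\asymp RQ^2N/X$ (up to $X^\varepsilon$), with $\vartheta$ essentially bounded. The hypothesis $M\ll N^2$ becomes $Q\ll Q^2$ and holds trivially, and $R\le 2N\ll M^{A}$ holds as well. The factor $(1+|\vartheta|A/MN)^{1/4}$ is then $O(X^\varepsilon)$-bounded or at worst a small power of $N$. Insert this into Theorem \ref{BCimprovement}:
\[
(HQ^2)^{1/2}R^{1/4}\Bigl(Q^{-1/8}+R^{1/8}+Q^{-1/20}R^{-3/20}H^{-1/20}+Q^{-1/20}H^{-3/20}+Q^{-1/8}\Bigr),
\]
multiplied by the $\ell^2$ norms. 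The aim is that after summing over $R\le 2N$ with the weights from \cite{FR}, and after multiplying by the normalisations coming from Cauchy--Schwarz and Poisson, the total is $\ll X^{1-\varepsilon'}\cdot$ (the trivial bound).

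Each of the five terms then gives one inequality among $N,Q,X$. Solving these, the binding constraint in regime (i) should come out as $N\le Q^{-33/28}X^{17/28-\varepsilon}$. In regimes (ii) and (iii), with $N\le X^{7/90}$ or $N\le X^{101/630}$, the binding constraint should become $Q\le X^{45/89-\varepsilon}$. The ranges of $N$ in (ii) and (iii) are governed by the other, non-Kloosterman parts of \cite{FR} (the dispersion terms and the ``$|a|$ small'' treatment in (iii)), so those parts are inherited unchanged.

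The main obstacle is this exponent bookkeeping. I need to check that the Poisson length $H$, the sizes of the $\ell^2$ norms (which carry divisor-bounded weights, hence $X^\varepsilon$ losses), and the dependence on $R$ combine so that the worst $R$ ($R\approx N$) still gives the claimed exponents. I also need to check that the $R^{1/8}N^{1/8}M^{-1/4}$ term, which grows with $R$, is not the dominant one. If it is, I would instead split the $R$-range: for small $R$ I would use Theorem \ref{BCThm} with $N$ replaced by $NR$ as in \cite{FR}, and for large $R$ I would use Theorem \ref{BCimprovement}, then take the better of the two bounds in each range.
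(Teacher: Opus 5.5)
\textbf{The gap is in parts (ii) and (iii).} You expect $Q\le X^{45/89-\varepsilon}$ to come out of the Kloosterman bookkeeping, and you take the $N$-ranges to be inherited from non-Kloosterman parts of \cite{FR}. Neither is how these parts are obtained. The dispersion estimate, with Theorem~\ref{BCimprovement} plugged in, is only effective when $N\le Q^{-33/28}X^{17/28-\varepsilon}$. At $N=X^{7/90}$ this forces $Q<X^{0.45}$, below $\sqrt X$, so no bookkeeping can reach those ranges.

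In the paper, as already in \cite{FR}, (ii) and (iii) are obtained by patching part (i) with Fouvry's earlier theorems:
\cite[Corollaire 1]{FouvII}, valid for $Q\le\min(\sqrt{NX},X^{4/7}N^{-6/7})$, $N>X^{\varepsilon}$, $|a|<X/3$; and
\cite[Th\'eor\`eme 1]{FouvI}, valid for $Q\le\min(\sqrt{NX},X^{5/8}N^{-3/4})$ when $|a|$ is tiny.
The second result, not anything in the dispersion argument, is the source of the restriction on $|a|$ in (iii). The exponent $45/89$ is exactly the value of $Q$ at which the top of range (i), $Q^{-33/28}X^{17/28}$, meets the bottom of Fouvry's ranges, $Q^2/X$; both equal $X^{1/89}$ there. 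Without these external results your argument gives neither (ii) nor (iii).

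When you patch, check the upper endpoints as well. For $Q$ near $X^{45/89}$, the second constraints only allow $N\le X^{41/534}$, resp.\ $N\le X^{85/534}$. These are slightly below $X^{7/90}$ and $X^{101/630}$, which are the values at \cite{FR}'s $Q=X^{53/105}$. The paper does not address this point.

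\textbf{Part (i).} Your route is the paper's: Theorem~\ref{BCimprovement} replaces Theorem~\ref{BCThm} in the treatment of $\widetilde W^{Err1}$, and everything else from \cite{FR} is kept. However, the step that produces the exponents is only asserted, and your dictionary would not produce them.

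In \cite[(48)]{FR} the inverse is $\overline{\gamma dd_1\nu_2k_1'}$ modulo $\nu_1'k_2'$, with $\nu_2\le 2N$ summed outside the absolute value. Since $\overline{\nu_2}$ depends on $k_2'$, it cannot be absorbed into the integer $\vartheta$. So one must take $m=\gamma dd_1\nu_2k_1'$. In the variables of Theorem~\ref{BCimprovement} this means: the $m$-length becomes $NQ$ (not $Q$), its $N$ becomes $Q$, $R\to\nu_1'\le 2N$, and $A\to H=4L^4Q^2M^{-1}$. The Poisson length carries no factor $R$.

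Also, $\vartheta=a(d_1\nu_1'-\nu_2)$ has size up to $|a|DN$ with $|a|$ up to $X/12$, so it is not bounded. You need the factor $(1+|\vartheta|A/(RMN))^{1/4}$ that the proof of Theorem~\ref{BCimprovement} actually yields. With only $MN$ in the denominator you could lose $N^{1/4}$, which exceeds the entire gain of $N^{1/8}$ in the first term.

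With the correct dictionary:
the hypothesis $M\ll N^2$ becomes $N\ll Q$;
the term $R^{1/8}N^{1/8}M^{-1/4}$ becomes $(NQ)^{-1/8}\le Q^{-1/8}$, so no splitting of the $R$-range is needed;
and one gets $\widetilde W^{Err1}\ll D^{C}X^{\varepsilon}(Q^{15/8}N^{11/4}+M^{3/20}Q^{33/20}N^{51/20})$.

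Shiu's bound $\|\alpha\|\ll M^{1/2}(\log X)^{O(1)}$, $D$ a power of $\log X$, and $MN\asymp X$ then turn the requirements $M^{1/2}Q^{15/16}N^{11/8}\le X^{1-\varepsilon}$ and $M^{23/40}Q^{33/40}N^{51/40}\le X^{1-\varepsilon}$ into $N\le Q^{-15/14}X^{4/7-\varepsilon}$ and $N\le Q^{-33/28}X^{17/28-\varepsilon}$. The latter is the binding one for $Q\ge\sqrt X$.
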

Note that in the extremal case where $N$ is very small, the bound in (i) gives $Q\leq X^{1/2+1/66-\epsilon}$, which is the same as in \cite{FR}.  This is of course because the savings are entirely dependent on the size of $N$, and thus if $N$ is very small we have no appreciable savings.  On the other hand, if $Q=X^{\frac 12+\epsilon}$ then (i) of our corollary gives $N\leq X^{1/56-\epsilon'}$ for some $\epsilon'$ depending on $\epsilon$, which is an improvement on the $N\leq X^{1/72-\epsilon'}$ from Corollary \ref{Cor1.1}.

In their paper, Fouvry and Radziwill state Corollary \ref{Cor1.1} as the consequence of an estimate involving Linnik's dispersion.  Define
$$E(\alpha, \beta, M, N, q, a):=\mathop{\sum\sum}_{\substack{m \sim M \\ n \sim N \\ mn \equiv a \,(\mathrm{mod}\, q)}}
\alpha_m \beta_n-\frac{1}{\varphi(q)}\mathop{\sum\sum}_{\substack{m \sim M \\ n \sim N \\ (mn,q)=1}}
\alpha_m \beta_n,$$
and define
$$\Delta(\alpha, \beta, M, N, Q, a):=\sum_{\substack{q \sim Q \\ (q,a)=1}}
\left| E(\alpha, \beta, M, N, q, a) \right|.$$
For the contribution of the small moduli to $\Delta(\alpha, \beta, M, N, Q, a)$, we have
$$\mathcal E^*(\beta, N, Q):=\sum_{\delta}\sum_{v \sim Q/\delta}
\sum_{\substack{ \delta' \,(\mathrm{mod}\,\delta) \\ (\delta',\delta)=1}}
\left| E(\beta, N, \delta, \delta'; v) \right|^2.$$
Note that if $\beta$ is Siegel-Walfisz and satisfies
$|\beta_n| \leq \tau_k(n)$ for some $k > 0$, and if $N > Q^\varepsilon$, then
$$\mathcal E^*(\beta, N, Q)\ll_A N^2 Q (\log N)^{-A}.$$
In this paper, we will prove the following:
\begin{theorem}\label{dispthm} Let $k$, $\varepsilon$, $\alpha_m$, and $\beta_n$ be as in the previous corollary.
Let $k\geq 1$ be an integer and let $\varepsilon>0$ be given. 
$$M>Q (MN)^{\varepsilon}, \qquad M>N>D^{10}.$$
Then, for all integers $1\leq |a|\leq X/3$, we have
\begin{align*}&|\Delta(\alpha,\beta,M,N,Q,a)|\\
&\ll ||\alpha||\left(MQ^{-1}\mathcal E^*(\beta,N,Q)+(\log X)^\kappa N^2Q+(\log X)^\kappa N^2D^{-\frac 12}M+D^CX^\epsilon\left(Q^{\frac{15}{8}}N^{\frac{11}{4}}+M^\frac{3}{20}Q^{\frac{33}{20}}N^{\frac{51}{20}}\right)
\right)^\frac 12.
\end{align*}
\end{theorem}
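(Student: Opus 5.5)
We follow the dispersion argument of Fouvry and Radziwiłł essentially line by line. The only change is at the step where they invoke Theorem \ref{BCThm}; there we invoke Theorem \ref{BCimprovement} instead, with the fixed factor $R$ supplied by the dispersion variable.

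First apply Cauchy--Schwarz in $m$ to $\Delta(\alpha,\beta,M,N,Q,a)$, after inserting coefficients $c_q$ of modulus one to remove the absolute values. This gives $|\Delta|^2\ll \|\alpha\|^2\,\mathcal W$, where
$$\mathcal W=\sum_{m}\Big|\sum_{q\sim Q}c_q\Big(\sum_{\substack{n\sim N\\ mn\equiv a\,(q)}}\beta_n-\frac{1}{\varphi(q)}\sum_{\substack{n\sim N\\(mn,q)=1}}\beta_n\Big)\Big|^2 .$$
Here $m$ is smoothed over a dyadic range of length $M$. Expanding the square produces the usual three terms $W_1-2\mathrm{Re}\,W_2+W_3$, which are sums over $q_1,q_2,n_1,n_2$. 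In these we write $\delta=(q_1,q_2)$, $q_i=\delta v_i$, and we also factor out the common part of $n_1,n_2$ as in \cite{FR}. The terms $W_2$ and $W_3$ are handled as in \cite{FR}: summing over $m$ gives main terms together with the contribution of small moduli. That contribution is controlled through $\mathcal E^*(\beta,N,Q)$ after the Siegel--Walfisz reduction. The pairs with $\delta>D$, or with $n_1$ and $n_2$ sharing a large factor, are bounded trivially. This yields the terms $MQ^{-1}\mathcal E^*$, $(\log X)^\kappa N^2Q$ and $(\log X)^\kappa N^2D^{-1/2}M$; the hypotheses $M>Q(MN)^\varepsilon$ and $N>D^{10}$ are used exactly as in \cite{FR}.

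The main step is $W_1$. There, Poisson summation in $m$ modulo $\delta v_1v_2$ (with $m\equiv a\bar n_1$ and $m\equiv a\bar n_2$) gives a zero frequency, which cancels against the main terms. The nonzero frequencies $h$ of size $H\approx \delta Q^2/(\delta^2 M)\cdot X^\epsilon$ lead, after the reciprocity $\frac{\bar x}{y}+\frac{\bar y}{x}\equiv\frac1{xy}$, to exponential sums of the shape
$$\sum_{h}\sum_{v_1}\sum_{v_2}\ \beta\text{-weights}\ e\Big(\vartheta\frac{h\,\overline{v_1 n_2}}{v_2 n_1\,\delta}\Big),$$
with $n_1,n_2\sim N$. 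This has the form (\ref{FRlike}). The variable $a$ of $\mathcal B$ is $h$ (or $h$ combined with $n_2$). The variable $m$ of $\mathcal B$ is $v_1$ or $v_1n_2$, of size about $QN/\delta$. The modulus $nR$ is $v_2\cdot(\delta n_1)$, so we take $R=\delta n_1$, or more precisely the fixed part coming from $n_1$ of size $N$. Fouvry and Radziwiłł absorbed this part into $n$, replacing $Q$ by $NQ$; instead we keep it as the fixed factor $R$ and apply Theorem \ref{BCimprovement}. Dependence on $\delta\le D$ is bounded by $D^C$.

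The main obstacle is to check the hypotheses of Theorem \ref{BCimprovement} and to do the exponent bookkeeping. We need $M\ll N^2$ in its notation, which can be arranged by choosing which of the two $Q$-sized variables plays the role of $m$. We need $R\ll M^A$, which holds trivially. We also need $\vartheta A/(MN)$ to be $\ll X^\epsilon$, which follows from the size of $H$. Then we substitute $A\approx Q^2N^2/M$, the $m$-length $\approx QN$ and $R\approx N$ into the five terms of Theorem \ref{BCimprovement}, multiply by $\|\cdot\|$-norms of size $(QN\cdot Q\cdot H)^{1/2}$, and divide by the normalisation of the Poisson step. We expect the first two terms to combine into $Q^{15/8}N^{11/4}$ and the term with exponent $1/10$ on $M$ to give $M^{3/20}Q^{33/20}N^{51/20}$, with the remaining terms dominated by these. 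We would carry this out carefully: the exponents are sensitive to exactly which factor is designated $R$, and the choice should be adjusted until the resulting bound matches the stated one.
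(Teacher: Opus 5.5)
Your strategy is the paper's. Keep Fouvry--Radziwill's treatment of $U$, $V$, $\widetilde W^{MT}$ and $\widetilde W^{Err2}$. Then, at their estimate (48), apply Theorem \ref{BCimprovement} with the $n_1$-variable $\nu_1'$ as the fixed factor $R$ instead of folding it into $n$.

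\textbf{The dictionary and exponent count.} In the paper, that substitution and the resulting exponent count are the entire proof. You leave them open (``adjusted until the resulting bound matches''), and several of the concrete choices you commit to would not give the stated bound.
In (48) the frequency is $\vartheta=a(d_1\nu_1'-\nu_2)$. So $\nu_2$, as well as $\nu_1'$, must stay outside the absolute value, and only $h,k_1',k_2'$ are summed inside. The paper takes $a\to h$, $m\to\gamma dd_1\nu_2k_1'$, $n\to k_2'$, $R\to\nu_1'$. This gives:
\begin{itemize}
\item $A=H=4L^4Q^2M^{-1}$, not $Q^2N^2/M$;
\item $M_{\mathcal B}\approx NQ$, but $\|\alpha\|\|\beta\|\|\nu\|\approx QH^{1/2}$, not $(QN\cdot Q\cdot H)^{1/2}$;
\item $N_{\mathcal B}\approx Q$ and $R\approx N$;
\item a factor $N^2\cdot MQ^{-2}$ from the outer $\nu_1',\nu_2$ sums and the Poisson normalisation.
\end{itemize}
Hence $\widetilde W^{Err1}\ll D^CX^\epsilon Q^2N^{11/4}(\cdots)$, where the bracket is that of Theorem \ref{BCimprovement}. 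In its third term use $A^{-3/10}$, which is what its proof gives; the $A^{1/20}$ in the statement is a misprint.
\begin{itemize}
\item The first term yields $Q^{15/8}N^{11/4}$.
\item The second and fifth terms yield only $Q^{15/8}N^{21/8}$ and $Q^{15/8}N^{9/4}$, so nothing ``combines''.
\item $M^{3/20}Q^{33/20}N^{51/20}$ comes from the fourth term $N^{3/20}A^{-3/20}M^{-1/5}$, not from the $M^{1/10}$-term.
\item The $M^{1/10}$-term gives $M^{3/10}Q^{27/20}N^{27/10}$, which is dominated by the fourth only when $Q^2\gg MN$. The paper prints $M^{3/20}$ there and appeals to $N<Q$.
\end{itemize}

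\textbf{Hypothesis checks.} Two of your checks fail as written.

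First, the condition $M\ll N^2$. One of the two $N$-sized variables always lies inside $m$: either $m$ contains $\nu_2$ with $R=\nu_1'$, or, after reciprocity, $m=\nu_1'k_2'$ with $R=\gamma dd_1\nu_2$. Either way $M_{\mathcal B}\approx NQ$ and $N_{\mathcal B}\approx Q$, so $M\ll N^2$ amounts to $N\ll Q$. This is not among the hypotheses of Theorem \ref{dispthm}; the paper simply assumes $N<Q$. In the paper's proof of Theorem \ref{BCimprovement} it is used only to discard $AM^{6/5}N^{1/10}R^{-2/5}$. Here that term is $AN^{4/5}Q^{13/10}$, which is harmless anyway.

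Second, and more seriously, the size of $|\vartheta|A/(M_{\mathcal B}N_{\mathcal B})$. It is only $\ll L^4|a|D/M$, and for $|a|$ near $X/3$ this is of size about $DN$, not $X^\epsilon$. With the factor $(1+|\vartheta|A/(MN))^{1/4}$ exactly as stated in Theorem \ref{BCimprovement}, you would lose about $N^{1/4}$. That is more than the $N^{1/8}$ saved in the first term. What you need is $(1+|\vartheta|A/(RMN))$. This is what the proof via $\mathcal C_{brR}$ actually produces, and it is $\ll DL^4$ here. It is also the factor the paper is silently using when it writes $(1+|a|/(MN))^{1/4}$.
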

In \cite{FR}, the last term is
$$D^CX^\epsilon\left(Q^{\frac{15}{8}}N^{\frac{23}{8}}+M^\frac{3}{20}Q^{\frac{33}{20}}N^{\frac{59}{20}}\right).$$
Hence we have a savings of $N^\frac 18$ in the first term and $N^{\frac 25}$ in the second.

We note that Theorem \ref{dispthm} also gives an improvement of \cite[Corollary 1.5]{FR}.

\begin{corollary}
Let $\varepsilon>0$ and $k>0$ be given. Let $\lambda_d$ 
be a sequence of complex numbers with $|\lambda_d|\leq \tau_k(d)$. Then
$$\sum_{\substack{q \sim Q \\ (q,a)=1}}\left|\sum_{\substack{X < n\leq 2X \\ n \equiv a \pmod q}}
\sum_{\substack{d \mid n \\ d\leq z}} \lambda_d-\frac{1}{\varphi(q)}\sum_{\substack{X < n\leq 2X \\ (n,q)=1}}
\sum_{\substack{d \mid n \\ d\leq z}} \lambda_d\right|\ll_A \frac{X}{(\log X)^A},$$
provided that one of the following three conditions holds:
\begin{enumerate}
\item[(i)] $X\geq 12$, $z\leq X^{45/89-\varepsilon}$, 
$X^{1-\varepsilon}>Q>X^{529/630+\varepsilon}$, 
and $1\leq |a|\leq X^{\varepsilon/10000}$;
\item[(ii)] $X\geq 12$, $z\leq X^{45/89-\varepsilon}$, 
$X^{1-\varepsilon}>Q>X^{83/90+\varepsilon}$, 
and $1\leq |a|\leq X^{1-3\varepsilon}$;
\item[(iii)] $X\geq 12$, $z\leq X^{1/2+\delta-\varepsilon}$, 
$X^{1-\varepsilon}>Q>X^{(55+66\delta)/56+\varepsilon}$, 
and $1\leq |a|\leq X^{1-3\varepsilon}$ for any fixed 
$0 < \delta < \tfrac{1}{66}$.
In this case the implicit constant in $\ll_A$ depends additionally on $\delta$.
\end{enumerate}
\end{corollary}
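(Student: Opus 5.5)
This corollary is the analogue of \cite[Corollary 1.5]{FR}, and I would obtain it the same way Fouvry and Radziwi\l\l\ do, with Theorem \ref{dispthm} replacing their dispersion estimate. Write $\sum_{d\mid n,\,d\le z}\lambda_d=\sum_{dm=n,\,d\le z}\lambda_d$, so the inner sum is a convolution $\alpha*\beta$ with $\alpha_d=\lambda_d\mathbf 1_{d\le z}$ and $\beta_m\equiv 1$. The sequence $\beta_m=1$ is trivially Siegel--Walfisz, and in fact it is equidistributed in every residue class up to $O(1)$ per class. After a dyadic decomposition $d\sim D_0$ and $m\sim X/D_0$, each piece falls into one of two regimes.

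\emph{Regime 1: $D_0$ small relative to $Q$.} If $D_0\le Q X^{-\varepsilon}$ (or in general when $X/D_0$ is large compared to $Q$), the sum over $m\equiv a\bar d\pmod q$ of $1$ is $X/(D_0q)+O(1)$. The main terms then cancel exactly against the $1/\varphi(q)$ term after Möbius inversion on $(m,q)=1$, with acceptable error. Because $Q<X^{1-\varepsilon}$, this trivial estimate handles every dyadic range with $D_0\ll Q^{-1}X^{1-\varepsilon}$.

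\emph{Regime 2: $D_0$ close to $z$.} The remaining ranges have $Q^{-1}X^{1-\varepsilon}<D_0\le z$. Here the smooth variable $m$ has size $N=X/D_0<QX^{\varepsilon}/\ldots$, which is small. Following \cite{FR}, I would take the roles ``$M$'' $=D_0$ for $\alpha=\lambda$ and ``$N$'' $=X/D_0$ for the Siegel--Walfisz variable $\beta=1$. Then I would apply Corollary \ref{cor1.1.5} in the appropriate case:
\begin{enumerate}
\item[(i)] In case (iii) of the present corollary I need $N\le Q^{-33/28}X^{17/28-\varepsilon}$ with $z=X^{1/2+\delta}$. I would translate Corollary \ref{cor1.1.5}(i) into this condition and check that the constraint $Q>X^{(55+66\delta)/56+\varepsilon}$ is exactly what is needed.
\item[(ii)] Cases (i) and (ii) of the present corollary would come from Corollary \ref{cor1.1.5}(iii) and (ii) respectively. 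With $z\le X^{45/89-\varepsilon}$, one needs $N=X/D_0\le X/Q\cdot X^{\varepsilon}$ to be at most $X^{101/630}$, resp.\ $X^{7/90}$. This gives $Q>X^{529/630}$, resp.\ $Q>X^{83/90}$. The restriction on $|a|$ is inherited.
\end{enumerate}
The lower bound $N\ge\exp((\log X)^\varepsilon)$ is automatic, since $N\ge X/z$.

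One complication is that the roles of modulus and variables are the ``dual'' ones: $Q$ is large here ($Q>X^{1/2}$), while the Corollary's modulus is $\le X^{45/89}$. So, as in \cite{FR}, I expect a divisor-switching step. The condition $n\equiv a\pmod q$ with $n=qr+a$ is converted into a problem with modulus $r\sim X/Q$ (small) or is absorbed through the convolution structure. Concretely, I would replicate the reduction of \cite[proof of Cor.~1.5]{FR} verbatim, only substituting the new ranges.

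The main obstacle is this bookkeeping: confirming that the FR reduction feeds precisely the parameters into Corollary \ref{cor1.1.5}, and that the exponents ($529/630$, $83/90$, $(55+66\delta)/56$) come out of the inequalities $N\le\cdots$ with the new $45/89$ and $33/28$ constants. I would verify case (iii) first, since it is the one that actually changes from \cite{FR}.
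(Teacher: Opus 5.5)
\textbf{There is a genuine gap in your Regime 2, which is where all the content of the corollary lies.}

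As written, you feed Corollary \ref{cor1.1.5} with modulus $q\sim Q$, ``$M$''$\,=D_0$ and ``$N$''$\,=X/D_0$. This cannot work for two reasons:
\begin{enumerate}
\item[(a)] $Q>X^{529/630}$ is far outside the modulus range of that corollary, which is at most about $X^{1/2+1/66}$.
\item[(b)] Since $D_0\le z$, you have $N=X/D_0\ge X/z$. That is at least $X^{44/89}$ in cases (i)--(ii) and $X^{1/2-\delta}$ in case (iii), nowhere near $X^{101/630}$, $X^{7/90}$ or $X^{(1-66\delta)/56}$.
\end{enumerate}
Your inequality ``$X/D_0\le X^{1+\varepsilon}/Q$'' would need $D_0\ge QX^{-\varepsilon}>z$, so that range is empty. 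In Regime 2 you actually have $X/D_0<QX^{\varepsilon}$; you interchanged $Q$ and $X/Q$. The switch you then suggest, to modulus $r\sim X/Q$, is also not the right one. For fixed $r$ the weight attached to $n$ is $c_{(n-a)/r}$, which depends on $n$, so no progression problem with absolute values over $r$ results.

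The missing idea, which is what the argument of \cite{FR} invoked by the paper does, is to make the \emph{divisor} $d$ the new modulus. Remove the absolute values with signs $c_q$, and write $n=dm$ and $n=a+qr$. The first term becomes
\[
\sum_{d\le z}\lambda_d\sum_{q\sim Q}c_q\,\#\{r:\ qr\equiv -a \pmod d,\ X<qr+a\le 2X\},
\]
with $r\asymp X/Q$. This is exactly the setting of Corollary \ref{cor1.1.5} with:
\begin{enumerate}
\item[(a)] modulus $d\le z$;
\item[(b)] long variable $\alpha=c_q$ on $M\asymp Q$;
\item[(c)] short Siegel--Walfisz variable $\beta=\mathbf 1$ on $N\asymp X/Q$.
\end{enumerate}
The hypotheses then line up as follows:
\begin{enumerate}
\item[(a)] $z\le X^{45/89-\varepsilon}$ is the modulus condition.
\item[(b)] $X/Q<X^{101/630}$, resp.\ $X^{7/90}$, i.e.\ $Q>X^{529/630}$, resp.\ $X^{83/90}$, are the $N$-conditions of Corollary \ref{cor1.1.5}(iii), resp.\ (ii).
\item[(c)] With modulus $X^{1/2+\delta}$, part (i) allows $N\le X^{(1-66\delta)/56}$, i.e.\ $Q>X^{(55+66\delta)/56}$.
\item[(d)] $N\ge X^{\varepsilon}$ comes from $Q<X^{1-\varepsilon}$, not from $N\ge X/z$.
\end{enumerate}

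What remains is routine bookkeeping:
\begin{enumerate}
\item[(a)] For $(q,d)=1$, check that the main term $\varphi(d)^{-1}\#\{r:(r,d)=1,\ X<qr+a\le 2X\}$ equals $X/(dq)+O(\tau(d)/\varphi(d))$.
\item[(b)] Check that this matches $\varphi(q)^{-1}\sum_{(n,q)=1}\sum_{d\mid n,\,d\le z}\lambda_d$ up to acceptable errors.
\item[(c)] Note that $(q,a)=1$ kills the pairs with $(d,q)>1$.
\item[(d)] Treat $(d,a)>1$ by extracting the common factor.
\item[(e)] Absorb the weights $|\lambda_d|\le\tau_k(d)$.
\end{enumerate}
Your Regime 1 is correct but becomes unnecessary once you switch this way.
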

The proof of this corollary is essentially identical to that of Corollary 1.5 in \cite{FR} with the changes made in our proof of Corollary \ref{cor1.1.5} above.  Hence, we omit this proof.

\section{Proof of Theorem \ref{BCimprovement}: trilinear forms with Kloosterman fractions and a fixed factor in the denominator}

First, we give the proof for our improved version of Bettin and Chandee's result.

Let $\mathcal N_{x}$ denote the interval $[N/x,2N/x]$.  Recalling the definitions of $\mathcal C_1$ and $C_b$, we define $C_{b;R}$ as 
\begin{align*}\mathcal C_{b;R}&(M,N,A;\beta,\nu)=\sum_{\substack{m\in \mathcal M \\ (m,R)=1}}\left|\sum_{\substack{r|R \\ r\mbox{ }square-free}}\mathop{\sum\sum}_{\substack{a\in \mathcal A, n\in \mathcal N_r \\ (mR,n)=1}}\beta_{nr}\nu_a e\left(\theta \frac{a\overline{m}}{nbrR}\right)\right|^2\\
\end{align*}
Note that 
\begin{gather}\label{CS1}\mathcal B(M,N,A;R)\ll ||\alpha||\mathcal C_{1;R}(M,N,A;R;\beta,\nu)^{\frac 12}.\end{gather}
In (5.2), the authors of \cite{BC} find that for any $b$,
\begin{align*}
\mathcal C_{b}(M,N,A;\beta,\nu)\ll & ||\beta||^2||\nu||^2M^\epsilon \left(1+\frac{|\vartheta|A}{bMN}\right)^\frac 12\bigg{(}AM(bN)^\frac 12+b^\frac 34AM^\frac 12N^\frac 54+\frac{AM^\frac 65N^\frac{1}{10}}{b^\frac 25}\\
&+b^\frac 15A^\frac 25M^\frac 65N^\frac{7}{10}+b^\frac 12A^\frac{7}{10}M^\frac 35N^{\frac{13}{10}}+b^\frac 12AN^\frac 74\bigg{)}.
\end{align*}
By our notation, we then have
\begin{align*}\mathcal C_{1;R}&(M,N,A;\beta,\nu)\\
=&\sum_{\substack{m\in \mathcal M \\ (m,R)=1}}\left|\sum_{\substack{r|R \\ r\mbox{ }square-free}}\mathop{\sum\sum}_{\substack{a\in \mathcal A, n\in \mathcal N_r \\ (mR,n)=1}}\beta_{nr}\nu_a e\left(\theta \frac{a\overline{m}}{nrR}\right)\right|^2\\
=&\sum_{\substack{m\in \mathcal M \\ (m,R)=1}}\left|\sum_{\substack{b\mbox{ }square-full \\ (m,b)=1}}\sum_{\substack{r|R \\ r\mbox{ }square-free}}\mathop{\sum\sum}_{\substack{a\in \mathcal A, n\in \mathcal N_{br} \\ (mbR,n)=1\\n\mbox{ }square-free}}\beta_{nbr}\nu_a e\left(\theta \frac{a\overline{m}}{nbrR}\right)\right|^2\\
\ll &\tau(R)\sum_{\substack{b'\mbox{ }square-full \\ (m,b')=1}}(b')^{-\frac 12}\sum_{\substack{b\mbox{ }square-full \\ (m,b)=1}}b^{\frac 12}\sum_{\substack{r|R \\ r\mbox{ }square-free}}\sum_{\substack{m\in \mathcal M \\ (m,R)=1}}\left|\mathop{\sum\sum}_{\substack{a\in \mathcal A, n\in \mathcal N_{br} \\ (mbR,n)=1\\n\mbox{ }square-free}}\beta_{nbr}\nu_a e\left(\theta \frac{a\overline{m}}{nbrR}\right)\right|^2\\
\ll &\tau(R)\sum_{\substack{b\mbox{ }square-full \\ (m,b)=1}}b^{\frac 12}\sum_{\substack{r|R \\ r\mbox{ }square-free}}\mathcal C_{brR}(M,N/br,A;\beta_{br},\nu),
\end{align*}
where $\beta_{br}(n)=\mu(n)^2\beta_{brn}$.  For the interval where $b>B$, we use the trivial bound that 
$$\mathcal C_{brR}(M,N/br,A;\beta_{br},\nu)\ll ||\nu||^2\frac{AMN}{br}||\beta_{br}||^2,$$
and hence these terms contribute
$$\ll ||\beta||^2||\nu||^2\frac{ANM^{1+\epsilon}}{B^\frac 12}.$$
Taking $B=N^\frac 12$, we then have that the above is
$$\ll ||\beta||^2||\nu||^2AN^\frac 34M^{1+\epsilon}.$$
For the terms with $b\leq B$, we apply our bound for $\mathcal C_{b}$ above, finding
\begin{align*}\tau(R)&\sum_{\substack{b\mbox{ }square-full \\ (m,b)=1 \\ b\leq B}}b^{\frac 12}\sum_{\substack{r|R \\ r\mbox{ }square-free }}\mathcal C_{brR}(M,N/br,A;\beta_{br},\nu)\\
\ll &M^\epsilon ||\nu||^2\sum_{\substack{b\mbox{ }square-full \\ (m,b)=1 \\ b\leq B}}\sum_{\substack{r|R \\ r\mbox{ }square-free}}||\beta_{br}||^2\left(1+\frac{|\vartheta|A}{MN}\right)^\frac 12\bigg{(}\frac{AMR^{\frac 12}N^\frac 12}{r^\frac 12}+\frac{R^\frac 34AM^\frac 12N^\frac 54}{b^\frac 12r^\frac 12}+\frac{AM^\frac 65N^\frac{1}{10}}{R^\frac 25b^\frac 12r^\frac 12}\\
&\phantom{abcde}+\frac{R^\frac 15A^\frac 25M^\frac 65N^\frac{7}{10}}{b^\frac 12r^\frac 12}+\frac{R^\frac 12A^\frac{7}{10}M^\frac 35N^{\frac{13}{10}}}{b^\frac 45r^\frac 45}+\frac{R^\frac 12AN^\frac 74}{b^\frac 54r^\frac 54}\bigg{)}\\
\ll &M^\epsilon ||\nu||^2||\beta||^2\left(1+\frac{|\vartheta|A}{MN}\right)^\frac 12\bigg{(}AMR^{\frac 12}N^\frac 12B^{\frac 12}+R^\frac 34AM^\frac 12N^\frac 54+\frac{AM^\frac 65N^\frac{1}{10}}{R^\frac 25}\\
&\phantom{abcde}+R^\frac 15A^\frac 25M^\frac 65N^\frac{7}{10}+R^\frac 12A^\frac{7}{10}M^\frac 35N^{\frac{13}{10}}+R^\frac 12AN^\frac 74\bigg{)}.
\end{align*}
Letting $B=N^\frac 12$ as above, we use the fact that $M\ll N^2$, which allows us to write that $\frac{AM^\frac 65N^\frac{1}{10}}{R^\frac 25}\leq AMR^{\frac 12}N^\frac 34.$  So 
\begin{align*}&\mathcal C_{1;R}(M,N,A;\beta,\nu)\\
&\ll M^\epsilon ||\nu||^2||\beta||^2\left(1+\frac{|\vartheta|A}{MN}\right)^\frac 12\bigg{(}AMR^{\frac 12}N^\frac 34+R^\frac 34AM^\frac 12N^\frac 54+R^\frac 15A^\frac 25M^\frac 65N^\frac{7}{10}+R^\frac 12A^\frac{7}{10}M^\frac 35N^{\frac{13}{10}}+R^\frac 12AN^\frac 74\bigg{)}\\
&=M^\epsilon ||\nu||^2||\beta||^2AMNR^\frac 12\left(1+\frac{|\vartheta|A}{MN}\right)^\frac 12\left(\frac{1}{N^{\frac 14}}+\frac{R^\frac 14N^\frac 14}{M^{\frac 12}}+\frac{M^\frac 15}{R^{\frac{3}{10}}A^{\frac{3}{5}}N^{\frac{3}{10}}}+\frac{N^{\frac{3}{10}}}{A^{\frac{3}{10}}M^{\frac 25}}+\frac{N^\frac 34}{M}\right).
\end{align*}
Thus by (\ref{CS1}),
\begin{align*}&\mathcal B(M,N,A;R)\\
&\ll ||\alpha||||\nu||||\beta||
\left(1+\frac{|\vartheta|A}{MN}\right)^\frac 14\\
&\phantom{ti}\times \bigg{(}A^\frac 12M^\frac 12R^{\frac 14}N^\frac 38+R^\frac 38A^\frac 12M^\frac 14N^\frac 58+R^\frac 1{10}A^\frac 2{10}M^\frac 35N^\frac{7}{20}+R^\frac 14A^\frac{7}{20}M^\frac 3{10}N^{\frac{13}{20}}+R^\frac 14A^\frac 12N^\frac 78\bigg{)}\\
&\ll M^\epsilon ||\alpha||||\nu||||\beta||(AMN)^\frac 12R^\frac 14\left(1+\frac{|\vartheta|A}{MN}\right)^\frac 14\left(\frac{1}{N^{\frac 18}}+\frac{R^\frac 18N^\frac 18}{M^{\frac 14}}+\frac{M^\frac 1{10}}{R^{\frac{3}{20}}A^{\frac{3}{10}}N^{\frac{3}{20}}}+\frac{N^{\frac{3}{20}}}{A^{\frac{3}{20}}M^{\frac 15}}+\frac{N^\frac 38}{M^\frac 12}\right).
\end{align*}


\section{Proof of Theorem \ref{dispthm}: an improved bound for dispersion}
\begin{proof}
Now we apply Theorem \ref{BCimprovement} to the work of Fouvry and Radziwill to prove Theorem \ref{dispthm}.

Recall that 
$$\Delta(\alpha, \beta, M, N, Q, a)=\sum_{\substack{q\sim Q \\ (a,q)=1}}\left|\mathop{\sum\sum}_{\substack{m \sim M \\ n \sim N \\ mn \equiv a \,(\mathrm{mod}\, q)}}\alpha_m \beta_n-\frac{1}{\varphi(q)}\mathop{\sum\sum}_{\substack{m \sim M \\ n \sim N \\ (mn,q)=1}}
\alpha_m \beta_n\right|.$$
Define $c_q=\pm 1$ or 0 to be such that if $(a,q)=1$ then $c_q=\pm 1$ with the sign matching the sign of the term inside of the absolute value above, while if $(a,q)>1$ then $c_q=0$.  In \cite{FR}, the authors bound this quantity by taking a well-chosen smooth, compactly supported function $\psi$ with bounded derivatives and using the fact that
\begin{gather}\label{Fourier1}\sum_{m\equiv a\pmod q}\psi\left(\frac mM\right)=\hat{\psi}(0)\frac Mq+\frac Mq\sum_{0<|h|\leq H}e\left(\frac{ah}{q}\right)\hat{\psi}\left(\frac{h}{q/M}\right)+O\left(M^{-1}\right),
\end{gather}
and
\begin{gather}\label{Fourier2}\sum_{(m,q)=1}\psi\left(\frac mM\right)=\frac{\varphi(q)}{q}\hat{\psi}(0)M+O\left(\tau(q)(\log 2M)^2\right).
\end{gather}
Applying Cauchy-Schwarz, the authors bound $\Delta$ as
$$|\Delta(\alpha,\beta,M,N,Q,a)|\ll ||\alpha||\left(W(Q)-2\Re V(Q)+U(Q)\right)^\frac 12,$$
where
\begin{gather*}U(Q)=\sum_m\psi\left(\frac{m}{M}\right)\left|\mathop{\sum\sum}_{\substack{q\sim Q, n\sim N \\ (mn,q)=1}}\frac{c_q}{\varphi(q)}\beta_n\right|^2,\\
V(Q)=\sum_m\psi\left(\frac{m}{M}\right)\left(\mathop{\sum\sum}_{\substack{q\sim Q, n\sim N \\ mn\equiv a \pmod q}}c_q \beta_n-
\mathop{\sum\sum}_{\substack{q\sim Q, n\sim N \\ (mn,q)=1}}\frac{c_q}{\varphi(q)} \beta_n\right)^2,\\
W(Q)=\sum_m\psi\left(\frac{m}{M}\right)\left|\mathop{\sum\sum}_{\substack{q\sim Q, n\sim N \\ mn\equiv a \pmod q}}c_q \beta_n
\right|^2 .
\end{gather*}
We use the same estimates for $U(Q)$ and $V(Q)$ as were found in the original paper.  

For $W(Q)$, Fouvry and Radziwill apply (\ref{Fourier1}) in \cite[(26) and (27)]{FR} to find that 
$$W(Q)=\widetilde{W}^{MT}(Q)+\widetilde{W}^{Err1}(Q)+\widetilde{W}^{Err2}(Q)+O\left(MN^2Q^{-1}X^{\eta-\frac \varepsilon 2}+X^{1+\eta+\frac\varepsilon 2}\right),$$
with each of the three terms corresponding to the three terms in (\ref{Fourier1}) and where $\eta>0$ is arbitrarily small.  We will use the same estimates for $\widetilde{W}^{MT}(Q)$ and $\widetilde{W}^{Err2}(Q)$ as in the original paper and will focus our efforts on improving $\widetilde{W}^{Err1}(Q)$.

Take $D<N^{10}$ as described in the statement of Theorem \ref{dispthm}.  In (48) of \cite{FR}, the authors arrive at the following estimate:
\begin{align*}\widetilde{W}^{Err1}\ll D C^{4} L^{100}MQ^{-2} \sup_{\substack{(\alpha_1,\ldots,\alpha_5)\\(d,d_1,\delta,\delta_1,\delta_2)}}
\sum_{\nu'_1\leq 2N}\sum_{\nu_2\leq 2N}\bigg{|}&\sum_{1\leq h\leq H}\sum_{k'_1\leq 2Q}\sum_{k'_2\leq 2Q}\eta_0(h)\eta_1(k'_1)\eta_2(k'_2)\\
&\times e\left(\frac{a h \overline{\gamma} \overline{d}\overline{d_1} (d_1\nu'_1-\nu_2)(\overline{\nu_2 k'_1})}{\nu'_1 k'_2}\right)\bigg{|},
\end{align*}
where the $\eta_i$ are 1-bounded functions, the $d$'s and $\delta$'s and $\gamma$ are small parameters, and the $\alpha_i$ are congruence requirements for the $\nu$'s and $k$'s mod $\gamma dd_1$, and
$$H=4L^4Q^2M^{-1}.$$

We now translate to the notation of \cite{BC} to the current task (the former is on the left of the arrow, the latter on the right) and apply Theorem \ref{BCimprovement} with 
$$\vartheta \to a(d_1\nu'_1 - \nu_2), \qquad a \to \underline{h}, \qquad m \to \gamma d d_1 \nu_2 \underline{k'_1}, \qquad n \to  \underline{k_2}, \qquad\nu'_1 \to R,$$
where the underlined terms are the ones that are considered to be non-fixed.  This makes our translation
$$|\vartheta| \ll |a| DN+|a|N, \qquad A \to H, \qquad
M \to \gamma d d_1 \nu_2 Q\ll NQ^{1+\epsilon}, \qquad N\to Q, \qquad R\to N.$$
(The authors of \cite{FR} actually break the sums into dyadic intervals and then bound the intervals appropriately, but this step only multiplies a factor of $M^\epsilon$ to the final bound, so we will skip it.)

Then
\begin{align*}
W^{Err1}\ll &\mathcal L^{200}D^{C_5}MQ^{-2}N^2QH^\frac 12\left(1+\frac{|a|}{MN}\right)^\frac 14\\
&H^\frac 12N^\frac 34Q\left(\frac{1}{Q^{\frac 18}}+\frac{1}{(QN)^{\frac 18}}+\frac{1}{(NQ)^{\frac{1}{20}}H^{\frac{3}{10}}}+\frac{1}{H^{\frac{3}{20}}N^\frac 15Q^{\frac 1{20}}}+\frac{1}{N^\frac 18Q^\frac 18}\right)\\
\ll &\mathcal L^{200}D^{C_5}MN^{\frac{11}{4}}H\left(1+\frac{|a|}{MN}\right)^\frac 14\left(\frac{1}{Q^{\frac 18}}+\frac{1}{(NQ)^{\frac{1}{20}}H^{\frac{3}{10}}}+\frac{1}{H^{\frac{3}{20}}N^\frac 15Q^{\frac 1{20}}}\right)\\
\ll &\mathcal D^{C_5}X^\epsilon\left(Q^{\frac{15}{8}}N^{\frac{11}{4}}+M^\frac{3}{20}Q^{\frac{27}{20}}N^{\frac{27}{10}} +M^\frac{3}{20}Q^{\frac{33}{20}}N^{\frac{51}{20}}\right),
\end{align*}
where the penultimate line eliminates the dominated $\frac{1}{(QN)^{\frac 18}}$-terms and the last line substitutes the definition of $H$.

Since $N<Q$, the second term is dominated by the third one, and hence
\begin{align}\label{FRimprovement}W^{Err1}
\ll &\mathcal D^{C_5}X^\epsilon\left(Q^{\frac{15}{8}}N^{\frac{11}{4}}+M^\frac{3}{20}Q^{\frac{33}{20}}N^{\frac{51}{20}}\right).
\end{align}
In \cite{FR}, the authors find
\begin{align*}W^{Err1}\tag{\cite[(49)]{FR}}
\ll &\mathcal D^{C_5}X^\epsilon\left(Q^{\frac{15}{8}}N^{\frac{23}{8}}+Q^{\frac{33}{20}}M^\frac{3}{10}N^{\frac{59}{20}}\right)
\end{align*}
Hence, we can replace \cite[(49)]{FR} with our new bound (\ref{FRimprovement}) in their Theorem 1.1 to find
\begin{align*}&|\Delta(\alpha,\beta,M,N,Q,a)|\\
&\ll ||\alpha||\left(MQ^{-1}\mathcal E^*(\beta,N,Q)+(\log X)^\kappa N^2Q+(\log X)^\kappa N^2D^{-\frac 12}M+D^CX^\epsilon\left(Q^{\frac{15}{8}}N^{\frac{11}{4}}+M^\frac{3}{20}Q^{\frac{33}{20}}N^{\frac{51}{20}}\right)
\right)^\frac 12.
\end{align*}
Since $\beta$ is Siegel-Walfisz and $\alpha$ and $\beta$ are $k$-fold divisor-bounded, we have $\mathcal E^*(\beta,N,Q)\ll N^2Q\log^{-A}X$ for every $A>0$ and thus
\begin{align*}&|\Delta(\alpha,\beta,M,N,Q,a)|\ll X\log^{-A} X+(\log^{\eta} X)N(\sqrt{MQ}+D^{-\frac 12}M)+D^{C'}X^\epsilon M^\frac 12Q^{\frac{15}{16}}N^{\frac{11}{8}}+Q^{\frac{33}{40}}M^\frac{23}{40}N^{\frac{51}{40}}.
\end{align*}
\end{proof}

\section{Proof of Corollary \ref{cor1.1.5}: unbalanced convolutions}
Beginning with the proof of (i), take
$$D=((\log 2X)^{\epsilon}/10).$$
Hence
$$X\log^{-A} X+(\log^{\eta} X)N(\sqrt{MQ}+D^{-\frac 12}M)\ll X\log^{-A} X.$$
By Shiu's theorem \cite{Sh}, since $|\alpha_m|\leq \tau_k(m)$, 
$$||\alpha||\ll M^{\frac 12}(\log X)^{(k^2-1)}.$$
So we require 
\begin{gather}\label{MQN1}M^\frac 12Q^{\frac{15}{16}}N^{\frac{11}{8}}<X^{1-\epsilon},\end{gather}
and
\begin{gather}\label{MQN2}M^\frac{23}{40}Q^{\frac{33}{40}}N^{\frac{51}{40}}<X^{1-\epsilon}.\end{gather}
Recalling that $MN\leq X$, (\ref{MQN1}) will be true when
\begin{gather}Q^{\frac{15}{16}}N^{\frac{7}{8}}<X^{\frac 12-\epsilon},\end{gather}
while (\ref{MQN2}) is true when
\begin{gather}Q^{\frac{33}{40}}N^{\frac{28}{40}}<X^{\frac{17}{40}-\epsilon}.\end{gather}
Solving both for $N$ gives
\begin{gather*}
N\leq Q^{-\frac{15}{14}}X^{\frac 47-\epsilon},\\
N\leq Q^{-\frac{33}{28}}X^{\frac{17}{28}-\epsilon}.
\end{gather*}
The latter is clearly the more restrictive as long as $Q\geq \sqrt X$.  This completes part (i) of the theorem.

For (ii), we recall a result of Fouvry \cite[Corollaire 1]{FouvII} that states that (\ref{firstquestion}) holds when $1<|a|<X/3$ and
$$Q\leq \min(\sqrt{NX},X^\frac 47N^{-\frac 67})\mbox{ }and\mbox{ }N>X^\epsilon.$$
Taking $Q\leq X^{45/89-\epsilon}$, our theorem allows us to take $N\leq X^{1/89-\epsilon}$, while Fouvry's result allows $ X^{1/89-\epsilon}\leq N\leq X^{7/90-\epsilon}$.  Similarly, for (iii), another result of Fouvry \cite[Théorème 1]{FouvI} allows \ref{firstquestion} to hold when $1<|a|<{\epsilon/1000}$ and
$$Q\leq \min(\sqrt{NX},X^\frac 58N^{-\frac 34})\mbox{ }and\mbox{ }N>X^\epsilon.$$
Our result again then gives $N\leq X^{1/89-\epsilon}$, while Fouvry's result allows $ X^{1/89-\epsilon}\leq N\leq X^{101/630-\epsilon}$.

\end{document}